\let\hat=\widehat
\let\tilde=\widetilde
\numberwithin{equation}{subsection}
\newtheorem{theorem}{Theorem}
\newtheorem{question}{Question} 
\newtheorem{proposition}[equation]{Proposition}
\newtheorem{lemme}[equation]{Lemma}
\theoremstyle{remark}
\DeclareMathOperator{\Image}{Im}
\DeclareMathOperator{\Max}{Max}
\DeclareMathOperator{\id}{id}
\DeclareMathOperator{\Spec}{Spec}
\def\cartesien{\ar@{}[rd]|{\square}}
\DeclareMathOperator{\GL}{GL}
\DeclareMathOperator{\Gal}{Gal}
\DeclareMathOperator{\sh}{sh}
\DeclareMathOperator{\Supp}{Supp}
\DeclareMathOperator{\rg}{rg}
\DeclareMathOperator{\Sl}{Sl}
\DeclareMathOperator{\Sh}{Sh}
\DeclareMathOperator{\nb}{nb}
\DeclareMathOperator{\trdeg}{trdeg}
\author[J.-B.~Teyssier]{Jean-Baptiste Teyssier}
\curraddr{Freie Universität Berlin, Mathematisches Institut, Arnimallee 3, 14195 Berlin, Germany}
\email{teyssier@zedat.fu-berlin.de}
\title[Nearby slopes and boundedness for $\ell$-adic sheaves in char. $>0$]{Nearby slopes and boundedness for $\ell$-adic sheaves in positive characteristic}
\begin{document}    
\maketitle
\section*{Introduction}
Let $S$ be a strictly henselian trait of equal characteristic $p>0$. As usual, $s$ denotes the closed point of $S$, $k$ its residue field, $\eta=\Spec K$ the generic point of $S$, $\overline{K}$ an algebraic closure of $K$ and $\overline{\eta}=\Spec \overline{K}$. Let $f: X\longrightarrow S$ be a morphism of finite type, $\ell\neq p$ a prime number, $\mathcal{F}$  an object of the derived category $D^{b}_c(X_{\eta}, \overline{\mathds{Q}}_{\ell})$ of $\ell$-adic complexes with bounded and constructible cohomology. \\ \indent Let $\psi_f^{t}: D^{b}_{c}(X_{\eta},  \overline{\mathds{Q}}_{\ell})\longrightarrow  D^{b}_{c}(X_{s},  \overline{\mathds{Q}}_{\ell})$ be the moderate nearby cycle functor. We say that $r\in \mathds{R}_{\geq 0}$ is a \textit{nearby slope of $\mathcal{F}$ associated to $f$} if one can find $N\in \Sh_c(\eta, \overline{\mathds{Q}}_{\ell})$ with slope $r$ such that $\psi_{f}^{t}(\mathcal{F}\otimes f^{\ast}N)  \neq 0$. We denote by $\Sl^{\nb}_{f}(\mathcal{F})$ the set of nearby slopes of $\mathcal{F}$ associated to $f$.\\ \indent
The main result of \cite{NbTey} is a boundedness theorem for the set of nearby slopes of a complex holonomic $\mathcal{D}$-module. The goal of the present (mostly programmatic) paper is to give some motivation for an analogue of this theorem for $\ell$-adic sheaves in positive chara\-cteristic. \\ \indent
For complex holonomic $\mathcal{D}$-modules, regularity is preserved by push-forward. On the other hand, for a morphism $C^{\prime}\longrightarrow C$ between smooth curves over $k$, a tame constructible sheaf on $C^{\prime}$  may acquire wild ramification by push-forward. If $0\in C$ is a closed point, the failure of $C^{\prime}\longrightarrow C$ to preserve tameness above 0 is accounted for by means of the ramification filtration on the absolute Galois group of the function field of the strict henselianization $C_{0}^{\sh}$ of $C$ at $0$. Moreover,
the Swan conductor at $0$ measures to which extent an $\ell$-adic constructible sheaf on $C$ fails to be tame at $0$. \\ \indent
In higher dimension, both these measures of wild ramification (for a morphism and for a sheaf) are missing in a form that would give a precise meaning to the following question raised in \cite{maildu22mars}  
\begin{question}\label{laquestion}
Let $g: V_1\longrightarrow V_2$ be a morphism between schemes of finite type over $k$, and $\mathcal{G}\in D^{b}_c(V_1, \overline{\mathds{Q}}_{\ell})$. Can one bound the wild ramification of $Rg_{\ast}\mathcal{G}$ in terms of the wild ramification of $\mathcal{G}$ and the wild ramification of $g_{|\Supp \mathcal{G}}$?
\end{question} 
Note that in an earlier formulation, "wild ramification of $g_{|\Supp \mathcal{G}}$" was replaced by "wild ramification of $g$", which cannot hold due to the following example that we owe to Alexander Beilinson: take $f:\mathds{A}^{1}_S\longrightarrow S$, $P\in S[t]$ and $i_P:\{P=0\}\hookrightarrow  \mathds{A}^{1}_S$. Then $i_{P\ast}\overline{\mathds{Q}}_{\ell}$ is tame but $f_{\ast}(i_{P\ast}\overline{\mathds{Q}}_{\ell})$ has arbitrary big wild ramification as $P$ runs through the set of Eisenstein polynomials.\\ \indent
If $f:X\longrightarrow S$ is proper, proposition \ref{control} shows that   $\Sl^{\nb}_{f}(\mathcal{F})$ controls the slopes of $H^{i}(X_{\overline{\eta}}, \mathcal{F})$ for every $i\geq 0$. It is thus tempting to take for "wild ramification of $\mathcal{G}$" the nearby slopes of $\mathcal{G}$.\\ \indent
So Question \ref{laquestion} leads to the question of bounding nearby slopes of  constructible $\ell$-adic sheaves. Note that this question was raised imprudently in \cite{NbTey}. It has a negative answer as stated in $loc.$ $it.$ since already the constant sheaf $\overline{\mathds{Q}}_{\ell}$ has arbitrary big nearby slopes. 
This is actually good news since for curves, these nearby slopes keep track of the aforementioned ramification filtration\footnote{see \ref{deuxiemefact} \eqref{3} for a precise statement.}. Hence, one can use them in higher dimension to quantify the wild ramification of a morphism and in Question \ref{laquestion} take for "wild ramification of $g_{|\Supp \mathcal{G}}$" the nearby slopes of $\overline{\mathds{Q}}_{\ell}$ on $\Supp \mathcal{G}$ associated with $g_{|\Supp \mathcal{G}}$ (at least when $V_2$ is a curve). \\ \indent
To get a good boundedness statement, one has to correct the nearby slopes associated with a morphism by taking into account the maximal nearby slope of $\overline{\mathds{Q}}_{\ell}$ associated with the same morphism. That such a maximal slope exists in general is a consequence of the following
\begin{theorem}\label{theofini}
Let $f: X\longrightarrow S$ be a morphism of finite type and $\mathcal{F}\in D^{b}_c(X_{\eta}, \overline{\mathds{Q}}_{\ell})$. The set $\Sl^{\nb}_{f}(\mathcal{F})$ is finite.
\end{theorem}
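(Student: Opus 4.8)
The plan is to reduce the statement to the case of curves by a dévissage on the dimension of $X$, where the finiteness of nearby slopes becomes a concrete statement about the ramification filtration on the absolute Galois group of a henselian trait, combined with the Hasse–Arf–type discreteness of breaks.

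First I would reduce to the situation where $\mathcal{F}$ is a single constructible sheaf placed in degree $0$: since $\mathcal{F}$ has bounded cohomology and $\psi_f^t$ is a triangulated functor, a slope $r$ lies in $\Sl^{\nb}_f(\mathcal{F})$ only if it lies in $\Sl^{\nb}_f(\mathcal{H}^i\mathcal{F})$ for some $i$, so the union being finite follows from finiteness for each cohomology sheaf. Next, stratify $X_\eta$ by a finite partition into locally closed subschemes on which $\mathcal{F}$ restricts to a lisse sheaf; using the long exact sequences attached to the open–closed decompositions and the compatibility of $\psi_f^t$ with the relevant $j_!$, $i_*$ functors, one is reduced to bounding nearby slopes of a lisse sheaf on a smooth $X_\eta$. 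Then, after shrinking, one can fibre $X$ over $S$ through a tower of relative curves (a relative "Lefschetz pencil" / Noether normalization argument over $S$), so that by the transitivity of nearby cycles for a composition $X\to Y\to S$ — writing $\psi_f^t$ as an iterate of relative nearby cycle functors along one-dimensional fibres — it suffices to treat the case $\dim X_\eta = 1$, i.e. $f$ a morphism from a curve (or relative curve) to $S$.

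In the curve case the computation is local around the finitely many points of $X_s$ and the finitely many points where $\mathcal{F}$ can be ramified. For a single such point $x$, the stalk $(\psi_f^t(\mathcal{F}\otimes f^*N))_x$ is governed by the restriction of $\mathcal{F}\otimes f^*N$ to the punctured henselian disc at $x$, i.e. by a representation of $G_{K_x}$ where $K_x$ is the fraction field of $\mathcal{O}_{X,x}^{\sh}$. The nearby slope $r$ of $N$ enters through $f^*N$, whose restriction to this disc is pulled back along the finite map $\Spec K_x \to \eta$; the ramification of $f$ at $x$ — encoded by this finite extension $K/K_x$ or rather by the jumps of the ramification filtration — determines which slopes $r$ can be "absorbed", and $\psi^t$ of the twist is nonzero exactly when $\mathcal{F}\otimes f^*N$ still has a moderate (slope-$0$) part, which happens only for $r$ in a finite set determined by the breaks of $\mathcal{F}|_{K_x}$ and the ramification breaks of $K/K_x$. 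Concretely, $r \in \Sl^{\nb}_f(\mathcal{F})$ forces $r$ to be, up to the natural normalization factor coming from $e(K/K_x)$, one of the finitely many breaks of $\mathcal{F}$ at $x$; as $x$ ranges over the finite set of relevant points one gets a finite set. (The footnoted statement \ref{deuxiemefact}\,\eqref{3}, which one may invoke, makes this dictionary between nearby slopes on curves and the ramification filtration precise.)

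The main obstacle I expect is the dévissage to relative curves done \emph{compatibly with} the moderate nearby cycle functor: one needs a clean statement that $\psi^t_{g\circ h}$ can be computed as $\psi^t_g \circ \psi^t_h$ (for $h$ the relative-curve fibration and $g$ its base), and that this composition is compatible with the twist by $f^*N$ in a way that lets the slope $r$ propagate correctly through the tower. Subtleties here include non-properness of the intermediate maps, possible nontriviality of $\psi^t$ in several degrees along the way, and controlling how a break at an intermediate stage relates to a break at the final stage — i.e. keeping track of the ramification-theoretic bookkeeping ($e$-factors, Herbrand $\psi$-functions) across the tower. Once this compatibility is in place, the finiteness at each stage is, by the curve analysis, a finite-break statement, and the whole set $\Sl^{\nb}_f(\mathcal{F})$ is contained in a finite union of finite sets, hence finite.
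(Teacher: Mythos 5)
Your reduction to a single sheaf in degree $0$ and your endpoint in the one-dimensional case (for $f$ finite, $\Sl^{\nb}_f(\mathcal{F})=\Sl(f_\ast\mathcal{F})$, which is finite) are sound and agree with what the paper does via Lemma \ref{deuxiemefact}~\eqref{1}. But the heart of your argument --- the dévissage to relative curves via a transitivity $\psi^t_{g\circ h}\simeq \psi^t_g\circ\psi^t_h$ along a tower $X\to Y\to S$ --- is exactly the step you flag as ``the main obstacle,'' and it is a genuine gap, not a technicality. For an intermediate base $Y$ of dimension $>1$ over $S$ there is no nearby cycle functor ``$\psi^t_h$'' in the classical sense (nearby cycles are defined relative to a trait), and even in the formalism of nearby cycles over general bases no such clean composition, compatible moreover with the twist by $f^\ast N$ for $N$ living on $\eta$, is available; the ``propagation of the slope $r$ through the tower'' that your plan needs is not even well defined, since $N$ is pulled back from the bottom trait and not from the intermediate bases. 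So the proposal as written does not close.

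The paper circumvents this by Deligne's argument from \cite[Th.\ finitude 3.7]{SGA4undemi}, which drops the dimension without ever composing nearby cycle functors. After compactifying and replacing $\mathcal{F}$ by $j_!\mathcal{F}$ so that $f$ is projective, one proves two statements: $(A)$ outside a finite set of slopes the support of $\psi^t_f(\mathcal{F}\otimes f^\ast N)$ is punctual, and $(B)$ outside a finite set of slopes $R\Gamma(X_s,\psi^t_f(\mathcal{F}\otimes f^\ast N))=0$; together these force the vanishing. For $(A)$ one base changes along $S'\to S$, where $S'$ is the strict henselization of $\mathds{A}^1_S$ at the generic point of $\mathds{A}^1_s$: by \cite[Th.\ finitude 3.4]{SGA4undemi} the pullback of $\psi_f$ is computed by $\psi_{p_i'}$ over the new trait $S'$ (up to invariants under a pro-$p$ group), the relative dimension drops by one, and --- crucially --- the preliminary \ref{prelimn} shows that $G_{K'}\to G_K$ respects the ramification filtrations, so $N'=h^\ast N$ has the same slope as $N$ and the induction hypothesis applies. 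For $(B)$ one uses properness, the projection formula and Lemma \ref{toutptilemme} to reduce to the finite set $\Sl(Rf_\ast\mathcal{F})$; note that your purely local curve analysis has no analogue of this global step, and without it the twist by $N$ cannot be controlled. Your stratification of $X_\eta$ into lisse pieces is harmless but unnecessary in this scheme. Finally, a small inaccuracy in your curve discussion: the relation between the slopes of $\mathcal{F}$ at $x$ and the resulting nearby slopes is governed by Herbrand functions of the extension $K_x/K$, not merely by the ramification index $e(K_x/K)$, though this does not affect finiteness.
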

The proof of this theorem follows an argument due to Deligne \cite[Th. finitude 3.7]{SGA4undemi}. For a $\mathcal{D}$-module version, let us refer to \cite{DeligneLettreMalgrange}. Thus, $\Max \Sl^{\nb}_{f}(\overline{\mathds{Q}}_{\ell})$ makes sense if $\Sl^{\nb}_{f}(\overline{\mathds{Q}}_{\ell})$ is not empty. Otherwise, we set $\Max \Sl^{\nb}_{f}(\overline{\mathds{Q}}_{\ell})=+\infty$. Proposition \ref{bornedim1}  suggests and gives a positive answer to the following question for smooth curves
\begin{question}\label{laquestion1}
Let $V/k$ be a scheme of finite type  and $\mathcal{F}\in D^{b}_c(V, \overline{\mathds{Q}}_{\ell})$. Is it true that the following set
\begin{equation}\label{apoids}
\{r/(1+\Max \Sl^{\nb}_{f}(\overline{\mathds{Q}}_{\ell})), \text{ for } r\in \Sl^{\nb}_{f}(\mathcal{F}) \text{ and } f\in \mathcal{O}_V\}
\end{equation}
is bounded?
\end{question} 
Let us explain what $\Sl^{\nb}_{f}(\mathcal{F})$ means in this global setting. A function $f\in \Gamma(U, \mathcal{O}_V)$ reads as $f:U\longrightarrow \mathds{A}^{1}_{k}$. If $S$ is the strict henselianization of $\mathds{A}^{1}_{k}$ at a geometric point over the origin, we set $\Sl^{\nb}_{f}(\mathcal{F}):=\Sl^{\nb}_{f_S}(\mathcal{F}_{U_S})$ where the subscripts are synonyms of pull-back.\\ \indent
For smooth curves, the main point of the proof of boundedness is the concavity of Herbrand $\varphi$ functions. In case $f$ has generalized semi-stable reduction (see \ref{gssr}), the above weighted slopes are the usual nearby slopes. This is  the following \newpage
\begin{theorem}\label{mainth}
Suppose that  $f:X\longrightarrow S$ has generalized semi-stable reduction. Then we have $\Sl^{\nb}_{f}(\overline{\mathds{Q}}_{\ell})=\{0\}$. 
\end{theorem}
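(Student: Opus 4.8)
The plan is to reduce the assertion to the tameness of the nearby cycle complex of the constant sheaf for semi-stable reduction, using two general properties of nearby cycles. Write $\psi_f$ for the full nearby cycle functor, valued in $D^{b}_{c}(X_{s},\overline{\mathds{Q}}_{\ell})$ together with its continuous action of the inertia group $\Gal(\overline{K}/K)$, and let $P=\Gal(\overline{K}/K^{t})\subset\Gal(\overline{K}/K)$ be the wild inertia subgroup, where $K^{t}$ is the maximal tamely ramified extension of $K$. The first property is a projection formula: for every $N\in\Sh_c(\eta,\overline{\mathds{Q}}_{\ell})$ there is a $\Gal(\overline{K}/K)$-equivariant isomorphism
\[
\psi_f(f^{\ast}N)\;\simeq\;\psi_f(\overline{\mathds{Q}}_{\ell})\otimes_{\overline{\mathds{Q}}_{\ell}}N
\]
in $D^{b}_{c}(X_{s},\overline{\mathds{Q}}_{\ell})$, where $N$ on the right denotes the underlying Galois representation and $\Gal(\overline{K}/K)$ acts diagonally; indeed the pull-back of $N$ to $\overline{\eta}=\Spec\overline{K}$ is constant, so this is the usual projection formula on $X\times_{S}\overline{S}\supset X_{\overline{\eta}}$ once one keeps track of the descent data recording the Galois actions.

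The second property is that the moderate nearby cycles are the wild inertia invariants of the full ones: for every $\mathcal{G}\in D^{b}_{c}(X_{\eta},\overline{\mathds{Q}}_{\ell})$,
\[
\psi_f^{t}(\mathcal{G})\;\simeq\;\psi_f(\mathcal{G})^{P},
\]
the invariants being underived because $P$ is pro-$p$, $\ell\neq p$, and $P$ acts through a finite quotient on each cohomology sheaf; stalkwise this comes from the fact that the geometric Milnor fibre at a point of $X_{s}$ is a pro-$P$-torsor over the moderate one. Combining the two properties gives
\[
\psi_f^{t}(\overline{\mathds{Q}}_{\ell}\otimes f^{\ast}N)\;=\;\psi_f^{t}(f^{\ast}N)\;\simeq\;\bigl(\psi_f(\overline{\mathds{Q}}_{\ell})\otimes N\bigr)^{P}.
\]

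Granting, for the moment, that $\psi_f(\overline{\mathds{Q}}_{\ell})$ is tame when $f$ has generalized semi-stable reduction — i.e. that $P$ acts trivially on it — the theorem follows: then $\bigl(\psi_f(\overline{\mathds{Q}}_{\ell})\otimes N\bigr)^{P}\simeq\psi_f(\overline{\mathds{Q}}_{\ell})\otimes N^{P}$, and if $N$ has slope $r>0$ it is isoclinic of positive slope, so $N^{P}=0$ and $\psi_f^{t}(\overline{\mathds{Q}}_{\ell}\otimes f^{\ast}N)=0$, whence $r\notin\Sl^{\nb}_f(\overline{\mathds{Q}}_{\ell})$; while for $N=\overline{\mathds{Q}}_{\ell}$, which has slope $0$, one gets $\psi_f^{t}(\overline{\mathds{Q}}_{\ell})=\psi_f(\overline{\mathds{Q}}_{\ell})\neq0$ (as $X_{s}\neq\emptyset$), so that $0\in\Sl^{\nb}_f(\overline{\mathds{Q}}_{\ell})$. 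Hence $\Sl^{\nb}_f(\overline{\mathds{Q}}_{\ell})=\{0\}$.

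The remaining point — the tameness of $\psi_f(\overline{\mathds{Q}}_{\ell})$ — is the main obstacle. It is étale-local on $X$, and $\psi_f$ commutes with smooth base change, so the local description of a generalized semi-stable morphism (see \ref{gssr}) reduces us, up to a smooth factor which drops out, to a model of the form $\Spec\mathcal{O}_{S}[x_{1},\dots,x_{r}]/(x_{1}^{e_{1}}\cdots x_{r}^{e_{r}}-\pi)\to S$ with each $e_{i}$ prime to $p$. Then $e=\lcm(e_{1},\dots,e_{r})$ is prime to $p$, the base change $\pi=\sigma^{e}$ is tame, and the normalization of the pulled-back model over $\Spec\mathcal{O}_{S}[\sigma]/(\sigma^{e}-\pi)$ is a disjoint union of toroidal models with normal crossing special fibres of multiplicities again prime to $p$; for such models the nearby cycle complex of $\overline{\mathds{Q}}_{\ell}$ is given by the classical Koszul resolution (of Rapoport--Zink type) in terms of the constant sheaves on the smooth strata of the special fibre, hence is tame, and tameness over the new trait is equivalent to tameness over $S$ since a tame base change does not change the wild inertia. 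This last step is essentially classical once the local structure theorem for generalized semi-stable morphisms is at hand; the role of the present statement is only to organize it so as to bound all nearby slopes of $\overline{\mathds{Q}}_{\ell}$ at once.
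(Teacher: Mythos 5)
Your first half is correct and is a genuinely different organization from the paper's argument: you reduce Theorem \ref{mainth}, via the equivariant projection formula $\psi_f(f^{\ast}N)\simeq\psi_f(\overline{\mathds{Q}}_{\ell})\otimes N$ and the identification $\psi_f^{t}=(\psi_f)^{P_K}$, to the single statement that $\psi_f(\overline{\mathds{Q}}_{\ell})$ is tame for a generalized semi-stable $f$; given tameness, the invariants compute as $\psi_f(\overline{\mathds{Q}}_{\ell})\otimes N^{P_K}$, which vanishes for $N$ isoclinic of positive slope. That reduction is sound. The paper never isolates tameness of $\psi_f(\overline{\mathds{Q}}_{\ell})$: it proves the vanishing $\psi_f^{t}(f^{\ast}N)\simeq 0$ directly, by passing to the local model, equalizing the multiplicities by a Kummer cover $t_i\mapsto t_i^{b_i}$, eliminating the common multiplicity by a tame change of trait (using \cite[Th. finitude 3.7]{SGA4undemi}), and then running an induction on the number $m$ of branches via blow-ups along $x_m=x_{m+1}=0$, using only compatibility of $\psi^{t}$ with proper push-forward, proper base change, and smooth morphisms. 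So the paper's route requires no computation of the nearby cycle complex at all, whereas yours concentrates all the content into the tameness input.

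That input is exactly where your argument has a gap. After the base change $\pi=\sigma^{e}$ with $e=\lcm(e_1,\dots,e_r)$ and normalization, the resulting models are toroidal but in general neither regular nor strictly semi-stable: they have toric (quotient) singularities and their special fibres need not be normal crossing divisors (indeed "NCD" is not even the right frame when the total space is singular). The Rapoport--Zink Koszul description you invoke is proved for strictly semi-stable schemes, so it does not apply to the objects your reduction produces, and you give no argument for tameness of their nearby cycles; moreover the step relating $\psi$ over the new trait to $\psi$ over $S$ through the normalization (finite, an isomorphism on generic fibres, hence $R\nu_{\ast}$-compatibility) is asserted rather than checked. The gap is reparable in two ways. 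Either quote the correct classical statement — for $X$ regular, flat over $S$, with special fibre a normal crossings divisor whose multiplicities are prime to $p$, the inertia action on $\psi_f(\overline{\mathds{Q}}_{\ell})$ is tame (SGA~7; Illusie's account of the local monodromy theorem; or Nakayama's theorem on nearby cycles of log smooth schemes) — which applies \emph{directly} to the local model of \ref{gssr}, making your base-change-and-normalization detour unnecessary; or replace the citation by a dévissage in the style of the paper (Kummer cover, tame change of trait to reach $\pi=x_1\cdots x_m$, then induction on $m$ by blow-ups), which proves the needed vanishing without any explicit description of $\psi_f(\overline{\mathds{Q}}_{\ell})$.
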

We owe the proof of this theorem to Joseph Ayoub. For the vanishing of $\mathcal{H}^{0}\psi^{t}_f$, we also give an earlier argument based on the geometric connectivity of the connected components of the moderate Milnor fibers in case of generalized semi-stable reduction. \\ \indent
As a possible application of a boundedness theorem in the arithmetic setting, let us remark that for every compactification $j: V\longrightarrow \overline{V}$, one could define a separated decreasing $\mathds{R}_{\geq 0}$-filtration on $\pi_1(V)$ by looking for each $r\in \mathds{R}_{\geq 0}$ at the category of $\ell$-adic local systems $L$ on $V$ such that the weighted slopes \eqref{apoids} of $j_{!}L$ are $\leq r$. 
\\ \indent
Let us also remark that on a smooth curve $C$, the tameness of $\mathcal{F}\in \Sh_c(C, \overline{\mathds{Q}}_{\ell})$ at $0\in C$ is characterized by $\Sl_{f}^{\nb}(\mathcal{F})\subset [0,\Max \Sl_{f}^{\nb}(\overline{\mathds{Q}}_{\ell})]$ for every $f\in \mathcal{O}_C$ vanishing only at $0$. This suggests a notion of tame complex in any dimension that may be of interest.
 \\ \indent

I thank Joseph Ayoub for his willingness to know about nearby slopes and for generously explaining me a proof of Theorem \ref{mainth} during a stay in Zurich in May 2015. I also thank Kay Rülling for a useful discussion.
This work has been achieved with the support of Freie Universität/Hebrew University of Jerusalem joint post-doctoral program. I thank Hélène Esnault and Yakov Varshavsky for their support.

\section{Notations}\label{notation}
\subsection{} For a general reference on wild ramification in dimension 1, let us mention \cite{CL}. Let $\eta_t$ be the point of $S$ corresponding to the tamely ramified closure $K_t$ of $K$ in $\overline{K}$ and $P_K:=\Gal(\overline{K}/K_t)$ the wild ramification group of $K$. We denote by $(G^{r}_K)_{r\in \mathds{R}_{\geq 0}}$ the \textit{upper-numbering ramification filtration on $G_K$} and define 
$$
G_K^{r+}:=\overline{\displaystyle{\bigcup}_{r^{\prime}>r}G^{r^{\prime}}_K}  $$
If $L/K$ is a finite extension, we denote by $S_L$ the normalization of $S$ in $L$ and $v_L$ the valuation on $L$ associated with the maximal ideal of $S_L$. \\ \indent
If moreover $L/K$ is separable, we denote by $q:G_K\longrightarrow G_K/G_L$  the quotient morphism and define a decreasing separated $\mathds{R}_{\geq 0}$-filtration on the set $G_K/G_L$ by $(G_K/G_L)^{r}:=q(G_K^{r})$. We also define $(G_K/G_L)^{r+}:=q(G_K^{r+})$. \\ \indent
In case $L/K$ is Galois, this filtration is the upper numbering ramification filtration on $\Gal(L/K)$. If $L/K$ is non separable trivial, the \textit{jumps} of $L/K$ are the $r\in \mathds{R}_{\geq 0}$ such that $(G_K/G_L)^{r+}\subsetneq (G_K/G_L)^{r}$. If $L/K$ is trivial, we say by convention that $0$ is the only jump of $\Gal(L/K)$.  
\subsection{} 
For $M\in D^{b}_c(\eta, \overline{\mathds{Q}}_{\ell})$, we denote by $\Sl(M)\subset \mathds{R}_{\geq 0}$ the set of \textit{slopes} of $M$ as defined in \cite[Ch 1]{KatzGKM}. We view $M$ in an equivalent way as a continuous representation of $G_K$. 
\subsection{}
Let  $f:X\longrightarrow S$ be a morphism of finite type and  $\mathcal{F}\in D^{b}_c(X_{\eta}, \overline{\mathds{Q}}_{\ell})$. Consider the following diagram with cartesian squares
$$
\xymatrix{
 X_s \ar[r]^-{i} \ar[d]&     X        \ar[d]_-{f}              & \ar[l]_-{\overline{j}}   X_{\overline{\eta}}  \ar[d] \\
s        \ar[r]       &   S  &  \ar[l]  \overline{\eta}
}
$$
Following \cite[XIII]{SGA7-2}, we define the \textit{nearby cycles of  $\mathcal{F}$} as
$$
\psi_f \mathcal{F} := i^{\ast}R\overline{j}_{\ast}\overline{j}^{\ast}\mathcal{F}
$$
By \cite[Th. finitude 3.2]{SGA4undemi}, the complex $\psi_f \mathcal{F}$ is an object of $D^{b}_c(X_{s}, \overline{\mathds{Q}}_{\ell})$ endowed with a continuous $G_K$-action. Define $X_{t}:=X\times_{S} \eta_t$ and $j_t: X_t\longrightarrow X$ the projection. Following \cite[I.2]{SGA7-1}, we define the \textit{moderate nearby cycles of  $\mathcal{F}$} as
$$
\psi_f^{t} \mathcal{F} := i^{\ast}Rj_{t\ast}j^{\ast}_t\mathcal{F}
$$
It is a complex in $D^{b}_c(X_{s}, \overline{\mathds{Q}}_{\ell})$ endowed with a continuous $G/P_K$-action. Since $P_K$ is a pro-$p$ group, we have a canonical identification 
$$
\psi_f^{t} \mathcal{F} \simeq (\psi_f \mathcal{F})^{P_K}
$$
Note that by proper base change \cite[XII]{SGA4-3}, $\psi_f^{t}$ and $\psi_f$ are compatible with proper push-forward.
\subsection{}\label{gssr} By a \textit{generalized semi-stable reduction} morphism, we mean a morphism $f:X\longrightarrow S$ of finite type such that etale locally on $X$, $f$ has the form
$$
S[x_1,\dots, x_n]/(\pi-x_1^{a_1}\cdots x_m^{a_m})\longrightarrow S
$$
where $\pi$ is a uniformizer of $S$ and where the $a_i\in \mathds{N}^{\ast}$ are prime to $p$. 
\subsection{}  If $X$ is a scheme, $x\in X$ and if $\overline{x}$ is a geometric point of $X$ lying over $X$, we denote by $X_{x}^{\sh}$ the strict henselization of $X$ at $x$.

\section{Nearby slopes in dimension one}
\subsection{} We show here that nearby slopes associated with the identity morphism are the usual slopes as in \cite[Ch 1]{KatzGKM}.
\begin{lemme}\label{toutptilemme}
For every $M\in \Sh_c(\eta,\overline{\mathds{Q}}_{\ell})$, we have
$$
\Sl_{\id}^{\nb}(M)=\Sl(M)
$$
\end{lemme}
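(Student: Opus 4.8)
The plan is to reduce the statement to elementary facts about representations of the wild inertia group $P_K$. Write $V_M,V_N$ for the continuous $G_K$-representations attached to $M,N\in\Sh_c(\eta,\overline{\mathds{Q}}_{\ell})$. First I would unwind $\psi^{t}_{\id}$: since $f=\id$ we have $\id^{\ast}N=N$, and since $S$ is already strictly henselian, the stalk at $s$ of $\psi^{t}_{\id}(M\otimes N)=i^{\ast}Rj_{t\ast}j_t^{\ast}(M\otimes N)$ is $R\Gamma(\eta_t,(M\otimes N)_{|\eta_t})=R\Gamma(P_K,V_M\otimes V_N)$. As $P_K$ is pro-$p$ and $\ell\neq p$, this complex is concentrated in degree $0$ and equals $(V_M\otimes V_N)^{P_K}$. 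So $r\in\Sl^{\nb}_{\id}(M)$ if and only if there is $N$ with $\Sl(N)=\{r\}$ and $(V_M\otimes V_N)^{P_K}\neq 0$; using the canonical isomorphism $(V_M\otimes V_N)^{P_K}=\Hom_{P_K}(V_N^{\vee},V_M)$ together with the fact that $\overline{\mathds{Q}}_{\ell}$-representations of the pro-$p$ group $P_K$ have finite image and are therefore semisimple, this condition reads precisely: $V_M$ and $V_N^{\vee}$ have a common irreducible $P_K$-subrepresentation.

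Next I would treat the inclusion $\Sl^{\nb}_{\id}(M)\subset\Sl(M)$, which is now formal. Given $N$ with $\Sl(N)=\{r\}$, let $W$ be a common irreducible $P_K$-constituent of $V_M$ and $V_N^{\vee}$. Passing to the dual preserves breaks, so all breaks of $V_N^{\vee}$ equal $r$; hence the break of $W$ is $r$. Since $W$ occurs in $V_M$, the number $r$ is a slope of $M$, i.e. $r\in\Sl(M)$.

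For the reverse inclusion, fix $r\in\Sl(M)$ and let $V_M(r)\subset V_M$ be the break-$r$ piece in the break decomposition of the restriction of $V_M$ to $P_K$; by hypothesis $V_M(r)\neq 0$. The key observation is that $V_M(r)$ is stable under $G_K$: the upper-numbering subgroups $G^{r'}_K$ are normal in $G_K$, so conjugation by an element of $G_K$ carries a $P_K$-subrepresentation of break $r$ to another one. Hence $N:=V_M(r)^{\vee}$ is an object of $\Sh_c(\eta,\overline{\mathds{Q}}_{\ell})$ with $\Sl(N)=\{r\}$, and $(V_M\otimes N)^{P_K}=\Hom_{P_K}(V_M(r),V_M)$ contains the inclusion $V_M(r)\hookrightarrow V_M$, so it is nonzero; therefore $r\in\Sl^{\nb}_{\id}(M)$, which completes the argument.

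I expect the only mildly delicate step to be the first one, namely fixing the normalization and identifying $\psi^{t}_{\id}(M\otimes N)$ cleanly with the $P_K$-invariants $(V_M\otimes V_N)^{P_K}$ (this is where strict henselianness of $S$ and the vanishing of the higher continuous cohomology of the pro-$p$ group $P_K$ with $\overline{\mathds{Q}}_{\ell}$-coefficients enter); once that is in place, the two inclusions follow, the first from semisimplicity of $P_K$-representations and the second from the $G_K$-stability of the break-$r$ part. It is also worth recording along the way that the same computation gives $\psi^{t}_{\id}(M)\simeq V_M^{P_K}$, the tame part of $M$, which explains conceptually why nearby slopes for $\id$ recover the usual slopes.
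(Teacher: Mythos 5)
Your proof is correct and follows essentially the same route as the paper's: identify $\psi^{t}_{\id}$ with the $P_K$-invariants functor and, for $r\in\Sl(M)$, twist by the dual of the pure slope-$r$ piece of $M$ (whose $G_K$-stability the paper uses implicitly when taking a quotient purely of slope $r$). The only divergence is in the inclusion $\Sl^{\nb}_{\id}(M)\subset\Sl(M)$, where the paper argues via the behaviour of breaks under tensor product (all slopes of $M\otimes N$ are nonzero when $N$ is pure of slope $r\notin\Sl(M)$), while you use finiteness of the image of $P_K$ and semisimplicity to reduce to a common irreducible constituent; both arguments are standard and correct.
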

\begin{proof}
We first remark that $\psi^{t}_{\id}$ is just the "invariant under $P$" functor. Suppose that $r\in \Sl(M)$. Then $M$ has a non zero quotient $N$ purely of slope $r$. The dual $N^{\vee}$ has pure slope $r$. Since $N$ is non zero, the canonical map
$$
\xymatrix{
N\otimes N^{\vee}\ar[r] &  \overline{\mathds{Q}}_{\ell}
}
$$
is surjective. Since taking $P$-invariants is exact, we obtain that the maps in 
$$
\xymatrix{
(M\otimes N^{\vee})^{P}\ar[r] &  (N\otimes N^{\vee})^{P}\ar[r] &  \overline{\mathds{Q}}_{\ell}
}
$$
are surjective. Hence $(M\otimes N^{\vee})^{P}\neq 0$, so $r\in \Sl_{\id}^{\nb}(M)$.\\ \indent
If $r$ is not a slope of $M$, then for any $N$ of slope $r$, the slopes of $M\otimes N$ are non zero. This is equivalent to $(M\otimes N)^{P}=0$. 
\end{proof}
 We deduce the following
\begin{lemme}\label{deuxiemefact}
Let $f: X\longrightarrow S$  be a finite morphism with $X$ local and $\mathcal{F}\in \Sh_c(X_\eta, \overline{\mathds{Q}}_{\ell})$.  
\begin{enumerate}
\item\label{1} $\Sl^{\nb}_{f}(\mathcal{F})=\Sl(f_{\ast}\mathcal{F})$.
\item\label{2}  Suppose that $X$ is regular connected and let $L/K$ be the extension of function fields induced by $f$. Suppose that $L/K$ is separable. Then $\Max \Sl^{\nb}_{f}(\overline{\mathds{Q}}_{\ell})$ is the highest jump in the ramification filtration on $G_K/G_L$.
\item\label{3} Suppose further in \eqref{2} that $L/K$ is Galois and set $G:=\Gal(L/K)$. Then $\Sl^{\nb}_{f}(\overline{\mathds{Q}}_{\ell})$ is the union of $\{0\}$ with the set of jumps in the ramification filtration on $G$.
\end{enumerate}
\end{lemme}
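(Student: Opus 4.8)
The plan is to reduce everything to the one-dimensional computation of Lemma \ref{toutptilemme} together with the compatibility of moderate nearby cycles with proper (here finite) push-forward. For \eqref{1}, since $f$ is finite, hence proper, and $X$ is local so that $X_s$ is a single point, the base-change compatibility noted in the excerpt gives $\psi_f^t(\mathcal{F}\otimes f^\ast N)\simeq \psi_{\id}^t(f_\ast(\mathcal{F}\otimes f^\ast N))$. By the projection formula $f_\ast(\mathcal{F}\otimes f^\ast N)\simeq (f_\ast\mathcal{F})\otimes N$, so $\psi_f^t(\mathcal{F}\otimes f^\ast N)\simeq (f_\ast\mathcal{F}\otimes N)^{P_K}$. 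Thus $r\in\Sl_f^{\nb}(\mathcal{F})$ if and only if there exists $N$ of slope $r$ with $(f_\ast\mathcal{F}\otimes N)^{P_K}\neq 0$, which by the argument of Lemma \ref{toutptilemme} is exactly the condition $r\in\Sl(f_\ast\mathcal{F})$. This proves \eqref{1}.

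For \eqref{2}, I would apply \eqref{1} with $\mathcal{F}=\overline{\mathds{Q}}_\ell$, so that $\Max\Sl_f^{\nb}(\overline{\mathds{Q}}_\ell)=\Max\Sl(f_\ast\overline{\mathds{Q}}_\ell)$. Now $f_\ast\overline{\mathds{Q}}_\ell$ corresponds, as a $G_K$-representation, to the induced representation $\operatorname{Ind}_{G_L}^{G_K}\overline{\mathds{Q}}_\ell=\overline{\mathds{Q}}_\ell[G_K/G_L]$, the permutation representation on the finite $G_K$-set $G_K/G_L$ (using that $L/K$ is separable and $X$ regular connected, so $S_L$ is the normalization and $f$ corresponds to the inclusion of strictly henselian local rings). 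The slopes of this permutation representation are governed by the ramification filtration $(G_K/G_L)^r=q(G_K^r)$: a standard computation identifies the breaks of $\overline{\mathds{Q}}_\ell[G_K/G_L]$ with the jumps of the $\mathds{R}_{\geq 0}$-filtration on $G_K/G_L$, the largest slope being the highest jump. This is essentially the formula expressing the Swan conductor / break decomposition of an induced module in terms of the lower- or upper-numbering filtration; I expect the cleanest route is to pass to a Galois closure $M/K$ containing $L$, use $G_K^r$ acting on the coset space, and extract the breaks coset by coset.

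For \eqref{3}, when $L/K$ is Galois with group $G=\Gal(L/K)$, the set $G_K/G_L$ is $G$ itself with $G_K$ acting through the regular representation of $G$, so $f_\ast\overline{\mathds{Q}}_\ell$ is the regular representation $\overline{\mathds{Q}}_\ell[G]$, which decomposes as $\bigoplus_\rho \rho^{\oplus\dim\rho}$ over the irreducibles $\rho$ of $G$. The break of $\rho$ is the largest $r$ with $\rho|_{G^r}$ nontrivial, i.e. with $\rho(G^{r})\neq\{1\}$ (interpreting this via the image $\overline{G^r}$ in $G$, which is $G^r$ in upper numbering on $G$). Hence $\Sl(\overline{\mathds{Q}}_\ell[G])$ is $\{0\}$ (contributed by the trivial representation, always present) together with the set of $r$ such that some irreducible $\rho$ has $\rho(G^r)\neq 1$ while $\rho(G^{r+})=1$ — but the set of such $r$, as $\rho$ ranges over all irreducibles, is exactly the set of $r$ for which $G^{r+}\subsetneq G^r$, i.e. the jumps of the filtration on $G$, since $\bigcap_\rho\ker\rho=\{1\}$. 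Combining with \eqref{1} gives the claim.

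The main obstacle is step \eqref{2}: carefully relating the upper-numbering filtration $(G_K/G_L)^r$ on the non-Galois coset space to the actual breaks of the induced $\ell$-adic sheaf $f_\ast\overline{\mathds{Q}}_\ell$. One must be careful that the break decomposition is not simply read off from restriction to the $G_K^r$ but involves the correct normalization of the filtration under the quotient $q$; the cleanest argument passes through a Galois closure and an explicit Mackey-type decomposition, or alternatively cites the compatibility of Swan conductors with induction (the conductor–discriminant formula) to pin down all the breaks, not merely their maximum. Once \eqref{2} is in place, \eqref{3} is a formal consequence via the decomposition of the regular representation.
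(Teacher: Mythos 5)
Parts (1) and (3) of your plan are correct and essentially the paper's own argument: (1) is exactly the combination of the compatibility of $\psi^{t}$ with finite (proper) push-forward, the projection formula, and Lemma \ref{toutptilemme}; and your treatment of (3) is the same computation the paper performs, phrased with irreducible constituents of $\overline{\mathds{Q}}_{\ell}[G]$ instead of the quotient permutation modules $\overline{\mathds{Q}}_{\ell}[G/G^{r+}]$. One small repair in (3): to see that every jump $r>0$ is a slope you must apply ``the kernels of the irreducibles intersect trivially'' to the quotient group $G/G^{r+}$ (whose irreducibles are the $\rho$ with $\rho(G^{r+})=1$, and whose kernels intersect in $G^{r+}$), not to $G$ itself as you wrote; otherwise nothing guarantees that the witness $\rho$ with $\rho(G^{r})\neq 1$ also kills $G^{r+}$.

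The genuine gap is (2), which you yourself flag as the main obstacle and leave as ``a standard computation.'' It is not an off-the-shelf statement: the filtration $(G_K/G_L)^{r}:=q(G_K^{r})$ on the non-Galois coset space is the paper's own definition, and your fallback via the conductor--discriminant formula only controls the Swan conductor of the induced module, i.e.\ a weighted sum of its breaks, so it cannot by itself ``pin down all the breaks.'' The missing idea is in fact short. For $L\neq K$, the maximal slope of $\overline{\mathds{Q}}_{\ell}[G_K/G_L]$ is the unique $r$ such that $G_K^{r}$ acts nontrivially while $G_K^{r+}$ acts trivially. If $r_0$ is the highest jump of the coset filtration, then $G_K^{r_0}\not\subset G_L$ gives the nontrivial action, and $G_K^{r_0+}\subset G_L$ \emph{together with the normality of $G_K^{r_0+}$ in $G_K$} gives $G_K^{r_0+}\subset gG_Lg^{-1}$ for every $g$, i.e.\ $G_K^{r_0+}$ fixes every coset and hence acts trivially on the permutation module. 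This normality observation (equivalently: $G_K^{r_0+}$ lies in the normal core of $G_L$) is the whole content of (2), and it is absent from your outline; your Mackey-type route through a Galois closure can be made to work, but it hinges on exactly the same point (normality of the upper-numbering subgroups forces all their orbits on $G_K/G_L$ to have equal size, so the breaks are precisely the jumps of the coset filtration), so it has to be supplied rather than assumed.
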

\begin{proof}
Point \eqref{1} comes from \ref{toutptilemme} and the compatibility of $\psi_f^{t}$  with proper push-forward. \\ \indent
From point $\eqref{1}$ and $f_{\ast}\overline{\mathds{Q}}_{\ell}\simeq \overline{\mathds{Q}}_{\ell}[G_K/G_L]$, we deduce
$$
\Sl^{\nb}_{f}(\overline{\mathds{Q}}_{\ell})=\Sl(\overline{\mathds{Q}}_{\ell}[G_K/G_L])
$$
If $L/K$ is trivial, \eqref{2} is true by our definition of jumps in that case. If $L/K$ is non trivial, $r_{\max}=\Max \Sl(\overline{\mathds{Q}}_{\ell}[G_K/G_L])$ is characterized by the property that  $G^{r_{\max}}_K$ acts non trivially on $\overline{\mathds{Q}}_{\ell}[G_K/G_L]$ and $G^{r_{\max}+}_K$ acts trivially. On the other hand, the highest jump $r_0$ in the ramification filtration on $G_K/G_L$ is such that $q(G^{r_0}_K)\neq \{G_L\}$ and $q(G^{r_0+}_K)= \{G_L\}$, that is $G^{r_0}_K\nsubset G_L$ and  $G^{r_0+}_K\subset G_L$. The condition
$G^{r_0}_K\nsubset G_L$  ensures that $G^{r_0}_K$ acts non trivially on 
$\overline{\mathds{Q}}_{\ell}[G_K/G_L]$. If $h\in G^{r_0+}_K$, then for every $g\in G_K$
$$
h\cdot (gG_L)=hg G_L=gg^{-1}hg G_L=g G_L
$$
where the last equality comes from the fact that since $G^{r_0+}_K$ is a normal subgroup in $G_K$, we have $g^{-1}hg\in G^{r_0+}_K\subset G_L$. So \eqref{2} is proved. \\ \indent
Let $S$ be the union of $\{0\}$ with the set of jumps in the ramification filtration of $G$. To prove \eqref{3}, we have to prove 
$\Sl(\overline{\mathds{Q}}_{\ell}[G])=S$. If $r\in \mathds{R}_{\geq 
0}$ does not belong to $S$, we can find an open interval $J$ 
containing $r$  such that $G^{r^{\prime}}=G^{r}$ for every 
$r^{\prime}\in J$. In particular, the image of $G^{r^{\prime}}_K$ by 
$G_K\longrightarrow \GL(\overline{\mathds{Q}}_{\ell}[G])$ does not depend on 
$r^{\prime}$ for every $r^{\prime}\in J$. So $r$ is not a slope 
of $\overline{\mathds{Q}}_{\ell}[G]$. \\ \indent
Reciprocally, $\overline{\mathds{Q}}_{\ell}[G]$ contains a copy of the trivial representation, so $0\in \Sl(\overline{\mathds{Q}}_{\ell}[G])$. Let $r\in S\setminus \{0\}$. The projection morphism $G\longrightarrow G/G^{r+}$ induces a surjection of $G_K$-representations
$$
\xymatrix{
  \overline{\mathds{Q}}_{\ell}[G] \ar[r] &   \overline{\mathds{Q}}_{\ell}[G/G^{r+}] \ar[r] & 0
}
$$
So $\Sl(\overline{\mathds{Q}}_{\ell}[G/G^{r+}] )\subset \Sl( \overline{\mathds{Q}}_{\ell}[G])$. Note that $G^{r+}$ acts trivially on $\overline{\mathds{Q}}_{\ell}[G/G^{r+}]$.
By definition $G^{r+}\subsetneq G^{r}$, so $G^{r}$ acts non trivially on $\overline{\mathds{Q}}_{\ell}[G/G^{r+}]$. So $r=\Max \Sl(\overline{\mathds{Q}}_{\ell}[G/G^{r+}])$ and point \eqref{3} is proved.

\end{proof}

\subsection{}\label{drawsomecons}  Let us draw a consequence of \ref{toutptilemme}. We suppose that $f:X\longrightarrow S$ is proper. Let $\mathcal{F}\in D^{b}_c(X_\eta, \overline{\mathds{Q}}_{\ell})$. The $G_K$-module associated to
$R^{k}f_{\ast} \mathcal{F}\in D^{b}_c(\eta, \overline{\mathds{Q}}_{\ell})$ is $H^{k}(X_{\overline{\eta}},\mathcal{F})$. From \ref{toutptilemme}, we deduce
\begin{align*}
\Sl(H^{k}(X_{\overline{\eta}},\mathcal{F}))&=\Sl^{\nb}_{\id}(R^{k}f_{\ast} \mathcal{F})\\
 & \subset \Sl^{\nb}_{\id}(Rf_{\ast} \mathcal{F})
\end{align*}
where the inclusion comes from the fact that taking $P_K$-invariants is exact. For every $N\in \Sh_c(\eta,\overline{\mathds{Q}}_{\ell})$, the projection formula and the compatibility of $\psi_f^{t}$ with proper push-forward gives
\begin{align*}
\psi_{\id}^{t}(Rf_{\ast} \mathcal{F}\otimes N)&\simeq  \psi_{\id}^{t}(Rf_{\ast} (\mathcal{F}\otimes f^{\ast} N)    )  \\
&\simeq Rf_{\ast}\psi_{f}^{t} (\mathcal{F}\otimes f^{\ast} N)  
\end{align*}
Hence we have proved the following
\begin{proposition}\label{control}
Let $f:X\longrightarrow S$ be a proper morphism, and let $\mathcal{F}\in D^{b}_c(X_\eta, \overline{\mathds{Q}}_{\ell})$. For every $i\geq 0$, we have
$$
\Sl(H^{i}(X_{\overline{\eta}},\mathcal{F}))\subset \Sl^{\nb}_{f}(\mathcal{F})$$
\end{proposition}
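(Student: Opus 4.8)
The plan is to reduce everything to the identity morphism of $S$ and then quote Lemma \ref{toutptilemme}. Fix $i\geq 0$. Since $f$ is proper, proper base change identifies the $G_K$-representation underlying $R^{i}f_{\ast}\mathcal{F}\in D^{b}_c(\eta,\overline{\mathds{Q}}_{\ell})$ with the cohomology $H^{i}(X_{\overline{\eta}},\mathcal{F})$; so I would first record the equality $\Sl(H^{i}(X_{\overline{\eta}},\mathcal{F}))=\Sl(R^{i}f_{\ast}\mathcal{F})$, and then apply Lemma \ref{toutptilemme} to $M=R^{i}f_{\ast}\mathcal{F}$ to rewrite this set as $\Sl^{\nb}_{\id}(R^{i}f_{\ast}\mathcal{F})$.

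Next I would propagate nearby slopes from the single cohomology sheaf to the full complex $Rf_{\ast}\mathcal{F}$. The point is that $\psi^{t}_{\id}$ is the functor of $P_K$-invariants, which is exact on $\overline{\mathds{Q}}_{\ell}$-sheaves because $P_K$ is a pro-$p$ group; hence it commutes with the formation of cohomology sheaves. Given $r\in\Sl^{\nb}_{\id}(R^{i}f_{\ast}\mathcal{F})$, choose $N\in\Sh_c(\eta,\overline{\mathds{Q}}_{\ell})$ of slope $r$ with $\psi^{t}_{\id}(R^{i}f_{\ast}\mathcal{F}\otimes N)\neq 0$. Then $\mathcal{H}^{i}(\psi^{t}_{\id}(Rf_{\ast}\mathcal{F}\otimes N))\simeq\psi^{t}_{\id}(R^{i}f_{\ast}\mathcal{F}\otimes N)$ is nonzero, so $\psi^{t}_{\id}(Rf_{\ast}\mathcal{F}\otimes N)\neq 0$ and $r\in\Sl^{\nb}_{\id}(Rf_{\ast}\mathcal{F})$; in other words $\Sl^{\nb}_{\id}(R^{i}f_{\ast}\mathcal{F})\subset\Sl^{\nb}_{\id}(Rf_{\ast}\mathcal{F})$.

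Finally I would transfer back to $f$. For the $N$ above, the projection formula gives $Rf_{\ast}\mathcal{F}\otimes N\simeq Rf_{\ast}(\mathcal{F}\otimes f^{\ast}N)$, and the compatibility of moderate nearby cycles with proper push-forward gives $\psi^{t}_{\id}(Rf_{\ast}(\mathcal{F}\otimes f^{\ast}N))\simeq Rf_{\ast}\psi^{t}_{f}(\mathcal{F}\otimes f^{\ast}N)$. Thus $\psi^{t}_{\id}(Rf_{\ast}\mathcal{F}\otimes N)\neq 0$ forces $Rf_{\ast}\psi^{t}_{f}(\mathcal{F}\otimes f^{\ast}N)\neq 0$, hence $\psi^{t}_{f}(\mathcal{F}\otimes f^{\ast}N)\neq 0$, which by definition says $r\in\Sl^{\nb}_{f}(\mathcal{F})$. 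Chaining the three steps gives the asserted inclusion. I do not expect a real obstacle here: the two things to be careful about are the identification of the $G_K$-action on $R^{i}f_{\ast}\mathcal{F}$ via proper base change and the exactness of $(-)^{P_K}$, which is precisely what makes the passage from $Rf_{\ast}\mathcal{F}$ to its $i$-th cohomology sheaf legitimate.
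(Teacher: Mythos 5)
Your proposal is correct and follows essentially the same route as the paper: identify $H^{i}(X_{\overline{\eta}},\mathcal{F})$ with the $G_K$-module $R^{i}f_{\ast}\mathcal{F}$, apply Lemma \ref{toutptilemme}, pass from the cohomology sheaf to the full complex $Rf_{\ast}\mathcal{F}$ using the exactness of $P_K$-invariants, and conclude via the projection formula and the compatibility of $\psi^{t}$ with proper push-forward. You merely spell out in more detail the inclusion $\Sl^{\nb}_{\id}(R^{i}f_{\ast}\mathcal{F})\subset\Sl^{\nb}_{\id}(Rf_{\ast}\mathcal{F})$, which the paper dispatches in one line.
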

\subsection{Boundedness}
We first need to see that the upper-numbering filtration is unchanged by purely inseparable base change. This is the following
\begin{lemme}\label{purelyins}
Let $K^{\prime}/K$ be a purely inseparable extension of degree $p^{n}$. Let $L/K$ be finite Galois extension,  $L^{\prime}:=K^{\prime}\otimes_K L$ the associated Galois extension of $K^{\prime}$. Then, the isomorphism
\begin{eqnarray}
\label{isogal0}\Gal(L/K) & \overset{\sim}{\longrightarrow}    &\Gal(L^{\prime}/K^{\prime})\\
g &  \longrightarrow     &  \id\otimes g
\end{eqnarray}
is compatible with the upper-numbering filtration.
\end{lemme}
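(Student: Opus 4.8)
I would reduce to the case $[K':K]=p$ and then exploit the $p$-th power Frobenius.

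\emph{Reduction to degree $p$.} First, write $K'/K$ as a tower $K=K_{0}\subset K_{1}\subset\cdots\subset K_{n}=K'$ of purely inseparable extensions of degree $p$. For each $i$ the $K_{i}$-algebra $L_{i}:=K_{i}\otimes_{K}L$ is a field (an irreducible separable polynomial remains irreducible under purely inseparable base change), it is Galois over $K_{i}$, the restriction $\Gal(L_{i}/K_{i})\to\Gal(L/K)$ is an isomorphism inverse to one of the shape \eqref{isogal0}, and $L_{i+1}=K_{i+1}\otimes_{K_{i}}L_{i}$. As the isomorphisms \eqref{isogal0} compose, compatibility with the upper numbering is transitive along the tower, so it suffices to treat $[K':K]=p$ (applied at each step, with $L$ replaced by $L_{i}$).

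\emph{The Frobenius isometry.} Assume $[K':K]=p$. In the situations of interest the residue field $k$ is perfect — being moreover separably closed, it is then algebraically closed — so that $[K:K^{p}]=p$; since the imperfection degree is unchanged by finite separable and by finite purely inseparable extensions, one also has $[K':(K')^{p}]=[L:L^{p}]=[L':(L')^{p}]=p$. Comparing $[K':(K')^{p}]=[K':K]\,[K:(K')^{p}]$ forces $(K')^{p}=K$; similarly $(L')^{p}=L$ (indeed $(L')^{p}=L^{p}(K')^{p}=L^{p}K$, and $L^{p}\subsetneq L^{p}K$ because a uniformizer $t$ of $K$ is not a $p$-th power in $L$: a $p$-th root $z$ of $t$ is purely inseparable over $K$, so $z\in L$ would force $z\in K$ and then $p\mid v_{K}(t)=1$; now conclude with $[L:L^{p}]=p$). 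Consequently the $p$-th power Frobenius $F\colon x\mapsto x^{p}$ is a field isomorphism $L'\overset{\sim}{\longrightarrow}L$ carrying $K'$ onto $K$, compatibly with the inclusions, and it is \emph{isometric} for the normalized valuations: $v_{L'}\bigl(L\setminus\{0\}\bigr)=p\,v_{L'}\bigl(L'\setminus\{0\}\bigr)=p\,\mathds{Z}$ since $L=(L')^{p}$, hence $v_{L'}|_{L}=p\,v_{L}$ and $v_{L}\circ F=v_{L'}$, and likewise over $K$. Finally, because any $\sigma'\in\Gal(L'/K')$ commutes with $p$-th powers, conjugation by $F$ carries $\sigma'$ to its restriction $\sigma'_{|L}$; that is, it is exactly the inverse of \eqref{isogal0}.

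\emph{Conclusion, and the delicate step.} Being an isometric isomorphism of $L'/K'$ onto $L/K$ intertwining the Galois actions, $F$ identifies $\mathcal{O}_{L'}$ with $\mathcal{O}_{L}$, whence $i_{L'/K'}(\sigma')=i_{L/K}(\sigma'_{|L})$ for every $\sigma'$, where $i_{L/K}(\sigma)=\inf_{x\in\mathcal{O}_{L}}v_{L}(\sigma x-x)$ is the invariant underlying the lower numbering. Thus \eqref{isogal0} identifies the lower-numbering filtrations, and since the Herbrand function $\varphi$, and hence the upper numbering, is constructed functorially from the lower numbering (from the indices $[\Gal(\cdot)_{0}:\Gal(\cdot)_{i}]$ and the set of jumps), \eqref{isogal0} identifies the upper-numbering filtrations as well, which is the claim. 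The step carrying genuine content — and the one I expect to be the obstacle — is $(L')^{p}=L$, equivalently $[K:K^{p}]=p$, i.e.\ perfectness of $k$: when $k$ is imperfect one has $[K:K^{p}]>p$, so $(K')^{p}\subsetneq K$, the Frobenius is no longer onto $L/K$, and $L'/L$ can be an immediate extension with non-trivial defect or can enlarge the residue field inseparably. To handle that generality the cleanest route is to invoke Abbes--Saito ramification theory, which is defined for an arbitrary residue field and is insensitive to purely inseparable base change because $G_{K'}\to G_{K}$ is already an isomorphism; alternatively one argues case by case according to whether $K'/K$ changes the value group, the residue field, or is a defect extension.
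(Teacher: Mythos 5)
Your proposal is correct and follows essentially the same route as the paper: the paper also reduces everything to compatibility with the lower-numbering filtration by using that $k$ is perfect to write $L^{\prime}=k((\pi_L^{1/p^{n}}))$ and then computing $(\sigma(\pi_L^{1/p^{n}})-\pi_L^{1/p^{n}})^{p^{n}}=\sigma_{|L}(\pi_L)-\pi_L$, which is exactly your Frobenius comparison (your tower reduction to degree $p$ and the explicit isometry $F\colon L^{\prime}\to L$ are just a repackaging), after which both arguments conclude via the Herbrand functions. Your closing caveat about imperfect residue fields is moot in the paper's setting, since its own proof invokes precisely the perfectness of $k$ that you use.
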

\begin{proof}
Note that for every $g\in \Gal(L/K)$, $\id\otimes g \in \Gal(L^{\prime}/K^{\prime})$ is determined by the property that its restriction to $L$ is $g$. \\ \indent
Let $\pi$ be a uniformizer of $S$ and $\pi_L$ a uniformizer of $S_L$. We have $K\simeq k((\pi))$ and $L\simeq k((\pi_L ))$. Since $k$ is perfect and since $K^{\prime}/K$  and $L^{\prime}/L$ are purely inseparable of degree $p^{n}$, we have $K^{\prime}=k((\pi^{1/p^{n}} ))$ and $L^{\prime}=k((\pi^{1/p^{n}}_L))$. So $\pi^{1/p^{n}}_L$ is a uniformizer of $S_{L^{\prime}}$. For every $\sigma\in \Gal(L^{\prime}/K^{\prime})$ we have
$$
(\sigma(\pi^{1/p^{n}}_L)-\pi^{1/p^{n}}_L)^{p^{n}}=\sigma_{|L}(\pi_L)-\pi_L
$$
so 
\begin{align*}
v_{L^{\prime}}(\sigma(\pi^{1/p^{n}}_L)-\pi^{1/p^{n}}_L)&=\frac{1}{p^{n}}v_{L^{\prime}}(\sigma_{|L}(\pi_L)-\pi_L)   \\ 
  & =v_L(\sigma_{|L}(\pi_L)-\pi_L) 
\end{align*}
So \eqref{isogal0} commutes with the lower-numbering filtration. Hence, \eqref{isogal0} commutes with the upper-numbering filtration and lemma \ref{purelyins} is proved.
\end{proof}
Boundedness in case of smooth curves over $k$ is a consequence of the following
\begin{proposition}\label{bornedim1}
Let $S_0$ be an henselian trait over $k$, let $\eta_0=\Spec K_0$ be the generic point of $S_0$ and $M\in \Sh_c(\eta_{0},\overline{\mathds{Q}}_{\ell})$.
There exists a constant $C_M\geq 0$ depending only on $M$ such that for every finite morphism $f: S_0\longrightarrow S$, we have
\begin{equation}\label{quantity}
\Sl_f^{\nb}(M)\subset [0,\Max(C_M,\Max\Sl_f^{\nb}(\overline{\mathds{Q}}_{\ell})) ]
\end{equation}
In particular, the quantity
$$
\Max\Sl_f^{\nb}(M)/(1+\Max\Sl_f^{\nb}(\overline{\mathds{Q}}_{\ell}))
$$
is bounded uniformely in $f$.
\end{proposition}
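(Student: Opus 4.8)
The plan is to translate the statement, by Lemma \ref{deuxiemefact} \eqref{1}, into an assertion about the ramification of $f_{\ast}M$, and to this end I first reduce to the case where $K_0/K$ is separable, in which case $f_{\ast}M\simeq\operatorname{Ind}_{G_{K_0}}^{G_K}M$ and its largest slope can be computed by feeding the Herbrand function of $K_0/K$ into the Mackey formula. For the reduction, let $K_1$ be the separable closure of $K$ in $K_0$ and $S_1$ the normalization of $S$ in $K_1$. The morphism $S_0\to S_1$ is a universal homeomorphism, so it identifies $M$ with a sheaf $M_1$ on $\eta_1:=\Spec K_1$ and $f_{\ast}M$ with $f_{1\ast}M_1$, whence $\Sl^{\nb}_f(M)=\Sl^{\nb}_{f_1}(M_1)$ and likewise for $\overline{\mathds{Q}}_{\ell}$. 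Moreover the inclusion $G_{K_0}\subseteq G_{K_1}$ is an equality (as $K_0/K_1$ is purely inseparable), and Lemma \ref{purelyins} shows that it respects the upper numbering filtrations; hence $M$ over $K_0$ and $M_1$ over $K_1$ have the same set of slopes, and we may set $C_M:=\Max\Sl(M)$, with $C_M:=0$ if $M=0$ — a quantity depending on $M$ only. After renaming we thus assume $K_0/K$ finite separable, $C_M$ equal to the largest slope of $M$ regarded as a $G_{K_0}$-representation, and we are reduced to bounding the slopes of $\operatorname{Ind}_{G_{K_0}}^{G_K}M$, recalling that by Lemma \ref{deuxiemefact} \eqref{2} the number $\Max\Sl^{\nb}_f(\overline{\mathds{Q}}_{\ell})$ is the largest jump $j$ of the ramification filtration on $G_K/G_{K_0}$.

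For every $v\ge 0$ the subgroup $G_K^{v+}$ is normal in $G_K$, so Mackey's formula gives
$$
\dim\bigl(\operatorname{Ind}_{G_{K_0}}^{G_K}M\bigr)^{G_K^{v+}}=N(v)\cdot\dim M^{G_K^{v+}\cap G_{K_0}},
$$
where $N(v)$ denotes the number of $G_K^{v+}$-orbits on $G_K/G_{K_0}$. By normality of $G_K^{v+}$ one has $N(v)=[K_0:K]$ if and only if $G_K^{v+}\subseteq G_{K_0}$, i.e. if and only if $v\ge j$. On the other hand the Herbrand identity $G_K^{v+}\cap G_{K_0}=G_{K_0}^{\psi_{K_0/K}(v)+}$ — which follows from the compatibility of the lower numbering with subgroups together with the transitivity of Herbrand functions in a Galois closure $L$ of $K_0/K$ — shows that $G_K^{v+}\cap G_{K_0}$ acts trivially on $M$ if and only if $\psi_{K_0/K}(v)\ge C_M$, that is, if and only if $v\ge\varphi_{K_0/K}(C_M)$. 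Assuming $M\ne 0$, the product above equals $[K_0:K]\dim M$ — equivalently, $G_K^{v+}$ acts trivially on $\operatorname{Ind}_{G_{K_0}}^{G_K}M$ — exactly when both of these conditions hold; therefore
$$
\Max\Sl^{\nb}_f(M)=\Max\Sl\bigl(\operatorname{Ind}_{G_{K_0}}^{G_K}M\bigr)=\Max\bigl(j,\varphi_{K_0/K}(C_M)\bigr).
$$

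It remains to bound $\varphi_{K_0/K}(C_M)$, and this is where the concavity of Herbrand functions enters: $\varphi_{K_0/K}$ vanishes at $0$ and has all slopes $\le 1$ (cf. \cite{CL}), hence $\varphi_{K_0/K}(u)\le u$ for every $u\ge 0$, and in particular $\varphi_{K_0/K}(C_M)\le C_M$. Combined with the formula above — and trivially when $M=0$ — this yields $\Sl^{\nb}_f(M)\subseteq\bigl[0,\Max\bigl(C_M,\Max\Sl^{\nb}_f(\overline{\mathds{Q}}_{\ell})\bigr)\bigr]$, which is \eqref{quantity}. The final assertion follows immediately, the displayed ratio being $\le 1$ when $\Max\Sl^{\nb}_f(\overline{\mathds{Q}}_{\ell})\ge C_M$ and $\le C_M$ otherwise (using $\Max\Sl^{\nb}_f(\overline{\mathds{Q}}_{\ell})\ge 0$). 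I expect the one genuinely delicate point to be the Herbrand identity $G_K^{v+}\cap G_{K_0}=G_{K_0}^{\psi_{K_0/K}(v)+}$, and in particular the bookkeeping of the "$+$" symbols when passing to the limit over the finite Galois quotients of $G_K$; the remaining steps are formal.
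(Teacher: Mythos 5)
Your argument is correct, and its core computation is genuinely different from the paper's. The paper follows the same opening moves — Lemma \ref{deuxiemefact} \eqref{1} to convert the statement into a bound on $\Sl(f_{\ast}M)$, Lemma \ref{purelyins} to dispose of the inseparable part, and the concavity inequality $\varphi\leq \mathrm{id}$ at the end — but its middle is different: it reduces mod $\ell^{n}$ (Katz) so that $M$ has finite monodromy trivialized by a Galois extension $L/K_0$ with group $H$ and top upper-numbering jump $r_M$, embeds $f_{\ast}M$ into $\mathds{F}_{\lambda}[\Gal(L/K)]^{\rg M}$ via the adjunction $M\hookrightarrow f_{M\ast}f_M^{\ast}M$, and then kills $\Gal(L/K)^{r}$ for $r>\Max(r_0,\varphi_{L/K}\psi_{L/K_0}(r_M))$ by purely finite-level manipulations (lower-numbering compatibility with subgroups plus transitivity of $\varphi,\psi$), arriving at the constant $C_M=\psi_{L/K_0}(r_M)$. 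You instead compute the top slope of $f_{\ast}M\simeq \operatorname{Ind}_{G_{K_0}}^{G_K}M$ exactly, namely $\Max\bigl(j,\varphi_{K_0/K}(\Max\Sl(M))\bigr)$, by combining Mackey's formula (using normality of $G_K^{v+}$, exactly as in the proof of Lemma \ref{deuxiemefact} \eqref{2}) with the Herbrand identity $G_K^{v+}\cap G_{K_0}=G_{K_0}^{\psi_{K_0/K}(v)+}$ for absolute Galois groups; this buys you the sharper and more intrinsic constant $C_M=\Max\Sl(M)$, avoids the mod-$\lambda$ reduction and the regular representation entirely, and as a side benefit never needs $L/K$ to be Galois (a point the paper's write-up glosses over when it writes $\Gal(L/K)$). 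The price is that you need the Herbrand formalism for the possibly non-Galois extension $K_0/K$ — the definition of $\varphi_{K_0/K}$ via a Galois closure, its concavity with slopes $\leq 1$, and the passage to absolute groups with the $+$ bookkeeping — which is standard and which your sketch (lower-numbering compatibility in a Galois closure, transitivity $\varphi_{L/K}=\varphi_{K_0/K}\circ\varphi_{L/K_0}$, then a limit over finite quotients, noting that everything factors through a finite quotient since $M$ is constructible) fills in correctly; I would just make that limit explicit in a final write-up. Your handling of the inseparable reduction (replacing $S_0$ by the normalization $S_1$ of $S$ in the separable closure $K_1$, using that $S_0\to S_1$ is a finite universal homeomorphism and that $C_M$ is unchanged) and of the final ratio bound is also sound.
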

\begin{proof}
By \ref{deuxiemefact} \eqref{1}, we have to bound $\Sl(f_{\ast}M)$ in terms of $\Max\Sl(f_{\ast}\overline{\mathds{Q}}_{\ell})$. 
Using \cite[I 1.10]{KatzGKM}, we can replace $\overline{\mathds{Q}}_{\ell}$ by $\mathds{F}_{\lambda}$, where $\lambda=\ell^{n}$. Hence, $G_{K_{0}}$ acts on $M$ via a finite quotient $H\subset \GL_{\mathds{F}_{\lambda}}(M)$. Let $L/{K_{0}}$ be the corresponding finite Galois extension and $f_M:S_{L}\longrightarrow S_0$ the induced morphism. We have $H=\Gal(L/K_0)$. Let us denote by $r_M$ the highest jump in the ramification filtration of $H$. Using Herbrand functions \cite[IV 3]{CL}, we will prove that the constant $C_M:=\psi_{L/K_0}(r_M)$ does the job.\\ \indent
Using \ref{purelyins}, we are left to treat the case where $K_0/K$
is separable. The adjunction morphism
$$
 M\longrightarrow f_{M\ast}f^{\ast}_M M
$$
is injective. Since $f^{\ast}_M M\simeq \mathds{F}_{\lambda}^{\rg M}$, we obtain by applying $f_{\ast}$  an injection
$$
f_{\ast} M\longrightarrow \mathds{F}_{\lambda}[\Gal(L/K)]^{\rg M}
$$
So we are left to bound the slopes of $\mathds{F}_{\lambda}[\Gal(L/K)]$ viewed as a $G_K$-representation, that is by \ref{deuxiemefact} \eqref{2} the highest jump in the upper-numbering ramification filtration of $\Gal(L/K)$.
 By \ref{deuxiemefact} \eqref{2},  $r_0:=\Max\Sl_f^{\nb}(\overline{\mathds{Q}}_{\ell})$ is the highest jump in the ramification filtration of $ \Gal(L/K) /H$. Choose $r>\Max(r_0, \varphi_{L/K}\psi_{L/K_0}(r_M))$. We have 
\begin{align*}
\Gal(L/K)^{r} & =  H\cap \Gal(L/K)^{r}\\
   & =   H\cap \Gal(L/K)_{\psi_{L/K}(r)} \\
   &=  H_{\psi_{L/K}(r)}\\
   &= H^{ \varphi_{L/K_0}\psi_{L/K}(r)}\\
   &=\{1\}
\end{align*}
The first equality comes from $r>r_0$. The third equality comes from the compatibility of the lower-numbering ramification filtration with subgroups. The last equality comes from the fact that 
$r>\varphi_{L/K}\psi_{L/K_0}(r_M)$
is equivalent to $\varphi_{L/K_0}\psi_{L/K}(r)>r_M$. Hence,
$$ \Sl_f^{\nb}(M) \subset [0,\Max(r_0, \varphi_{L/K}\psi_{L/K_0}(r_M))]
$$ 
Since $\varphi_{L/K}:[-1,+\infty[\longrightarrow \mathds{R}$ is concave, satisfies $\varphi_{L/K}(0)=0$ and is equal to the identity on $[-1,0]$, we have 
$$
\varphi_{L/K}\psi_{L/K_0}(r_M)\leq \psi_{L/K_0}(r_M)
$$
and we obtain \eqref{quantity} by setting $C_M:=\psi_{L/K_0}(r_M)$.
\end{proof}
\section{Proof of Theorem \ref{theofini}}

\subsection{Preliminary}\label{prelimn}
Let us consider the affine line $\mathds{A}^{1}_S\longrightarrow S$ over $S$. Let $s^{\prime}$ be the generic point of $\mathds{A}^{1}_s$ and $S^{\prime}$ the strict henselianization of $\mathds{A}^{1}_S$ at $s^{\prime}$. We denote by  $\overline{S}$ the normalization of $S$ in $\overline{\eta}$, by $\kappa$ the function field of the strict henselianization of $\mathds{A}^{1}_{\overline{S}}$ at $s^{\prime}$, and by $\overline{\kappa}$ an algebraic closure of $\kappa$. We have $\kappa\simeq K^{\prime}\otimes_K \overline{K}$ and  
\begin{equation}\label{isogal}
G_K\simeq \Gal(\kappa/K^{\prime})
\end{equation}
Let $L/K$ be a finite Galois extension of $K$ in $\overline{K}$. Set $L^{\prime}:=K^{\prime}\otimes_K L$. At finite level, \eqref{isogal} reads
\begin{eqnarray}
\label{iso} \Gal(L/K)& \overset{\sim}{\longrightarrow}    &\Gal(L^{\prime}/K^{\prime}) \\
g &  \longrightarrow     &  \id\otimes g
\end{eqnarray}
Since a uniformizer in $S_L$ is also a uniformizer in $S^{\prime}_{L^{\prime}}$, we deduce that \eqref{iso} is compatible with the lower-numbering ramification filtration on $\Gal(L/K)$ and $\Gal(L^{\prime}/K^{\prime})$. Hence, \eqref{iso} is compatible with the upper-numbering ramification filtration on $\Gal(L/K)$ and $\Gal(L^{\prime}/K^{\prime})$. We deduce that through  \eqref{isogal}, the canonical surjection $G_{K^{\prime}} \longrightarrow G_K$
is compatible with the upper-numbering ramification filtration.
\subsection{The proof}
We can suppose that $\mathcal{F}$ is concentrated in degree 0. In case $\dim X=0$, there is nothing to prove. We first reduce the proof of Theorem \ref{theofini} to the case where $\dim X=1$ by arguing by induction on $\dim X$. \\ \indent
Since the problem is local on $X$, we can suppose that $X$ is affine. We thus have a digram
\begin{equation}\label{factorization}
\xymatrix{
X\ar[r]   \ar[rd]  &\mathds{A}^{n}_S \ar@{^{(}->}[r]   \ar[d]&  \mathds{P}^{n}_S   \ar[ld]\\
 &S & 
}
\end{equation}
Let $\overline{X}$ be the closure of $X$ in $\mathds{P}^{n}_S$ and let $j:X\hookrightarrow  \overline{X}$ be the associated open immersion. Replacing $(X,\mathcal{F} )$ by $(\overline{X}, j_{!}\mathcal{F})$, we can suppose $X/S$ projective. Then Theorem \ref{theofini} is a consequence of the following assertions
\begin{enumerate}
\item[$(A)$] There exists a finite set $E_A\subset \mathds{R}_{\geq 0}$ such that for every $N\in \Sh_c(\eta, \overline{\mathds{Q}}_{\ell})$ with slope not in $E_A$, the support of $\psi_{f}^{t}(\mathcal{F}\otimes f^{\ast}N)$ is punctual.
\item[$(B)$] There exists a finite set $E_B\subset \mathds{R}_{\geq 0}$ such that for every $N\in \Sh_c(\eta, \overline{\mathds{Q}}_{\ell})$ with slope not in $E_B$, we have $$R\Gamma(X_s, \psi_{f}^{t}(\mathcal{F}\otimes f^{\ast}N))\simeq 0$$
\end{enumerate}
Let us prove $(A)$. This is a local statement on $X$, so we can suppose $X$ to be a closed subset in $\mathds{A}^{n}_{S}$ and consider the factorisations
$$
\xymatrix{
X\ar[r]^-{p_i}   \ar[rd]_-{f}  &\mathds{A}^{1}_S\ar[d] \\
& S 
}
$$
where $p_i$ is the projection on the $i$-th factor of $\mathds{A}^{n}_S$. Using the notations in \ref{prelimn}, let $X^{\prime}/S^{\prime}$ making the upper square of the following diagram
$$
\xymatrix{
X^{\prime}\ar[r]^-{\lambda}   \ar[d]_-{p_i^{\prime}}  &X  \ar[d]^-{p_i} \\
S^{\prime}\ar[r] \ar[rd]_-{h}& \mathds{A}^{1}_S \ar[d] \\
                              &     S
}
$$
cartesian. Let us set $\mathcal{F}^{\prime}:=\lambda^{\ast}\mathcal{F}$ and $N^{\prime}:=h^{\ast}N$. From \cite[Th. finitude 3.4]{SGA4undemi}, we have
\begin{equation}\label{lambda}
\lambda^{\ast}\psi_{f}(\mathcal{F}\otimes f^{\ast}N)\simeq \psi_{hp^{\prime}_i}(\mathcal{F}^{\prime}\otimes p^{\prime\ast}_i N^{\prime})\simeq  \psi_{p^{\prime}_i}(\mathcal{F}^{\prime}\otimes p^{\prime\ast}_i N^{\prime})^{G_{\kappa}}
\end{equation}
where $G_{\kappa}$ is a pro-$p$ group sitting in an exact sequence
$$
\xymatrix{
1\ar[r]& G_{\kappa} \ar[r]& G_{K^{\prime}} \ar[r] & G_K \ar[r] & 1 
}
$$
In particular, $G_{\kappa} $ is a subgroup of the wild-ramification group $P_{K^{\prime}}$ of $G_{K^{\prime}}$. So applying the $P_{K^{\prime}}$-invariants on  \eqref{lambda} yields
\begin{equation}\label{doitetrenul}
\lambda^{\ast}\psi_{f}^{t}(\mathcal{F}\otimes f^{\ast}N)   \simeq \psi_{p^{\prime}_i}^{t}(\mathcal{F}^{\prime}\otimes p^{\prime\ast}_i N^{\prime})
\end{equation}
If $N$ has pure slope $r$, we know from \ref{prelimn} that  $N^{\prime}$ has pure slope $r$ as a sheaf on $\eta^{\prime}$. Applying the recursion hypothesis gives a finite set $E_i\subset \mathds{R}_{\geq 0}$ such that the right-hand side of \eqref{doitetrenul} is 0 for $N$ of slope not in $E_i$. The union of the $E_i$ for $1\leq i \leq n$ is the set $E_A$ sought for in $(A)$. \\ \indent
To prove $(B)$, we observe that the compatibility of $\psi^{t}_f$ with proper morphisms and the projection formula give
$$
R\Gamma(X_s, \psi_{f}^{t}(\mathcal{F}\otimes f^{\ast}N))\simeq \psi_{\id}^{t}(Rf_{\ast }\mathcal{F}\otimes N)
$$
By \ref{toutptilemme}, the set $E_B:=\Sl(Rf_{\ast }\mathcal{F})$ has the required properties.\\ \indent
We are thus left to prove Theorem \ref{theofini} in the case where $\dim X=1$. At the cost of localizing, we can suppose that $X$ is local and maps surjectively on $S$. Let $x$ be the closed point of $X$. Note that $k(x)/k(s)$ is of finite type but may not be finite. Choosing a transcendence basis of $k(x)/k(s)$  yields a factorization $X\longrightarrow S^{\prime}\longrightarrow S$ satisfying $\trdeg_{k(s^{\prime})}k(x)=\trdeg_{k(s)}k(x) -1$. \\ \indent
So we can further suppose that $k(x)/k(s)$ is finite. Since $k(s)$ is algebraically closed, we have $k(x)=k(s)$. If $\hat{S}$ denotes the completion of $S$ at $s$, we deduce that $X\times_S \hat{S}$ is finite over $\hat{S}$. By faithfully flat descent \cite[VIII 5.7]{SGA1}, we obtain that $X/S$ is finite. We conclude the proof of Theorem \ref{theofini} with \ref{deuxiemefact} \eqref{1}.


\section{Proof of Theorem \ref{mainth}}
\subsection{}
That $0\in \Sl_{f}(\overline{\mathds{Q}}_{\ell})$ is easy by looking at the smooth locus of $f$. We are left to prove that for every $N\in \Sh_c(\eta, \overline{\mathds{Q}}_{\ell})$ with slope $>0$, the following holds
\begin{equation}\label{annulation}
\psi_f^{t} f^{\ast}N\simeq 0
\end{equation}
Since the problem is local on $X$ for the étale topology, we can suppose that $X=S[x_1,\dots, x_n]/(\pi-x_1^{a_1}\cdots x_m^{a_m})$ and we have to prove \eqref{annulation} at the origin $0\in X_s$. Let $a$ be the lowest commun multiple of the $a_i$ and define $b_i=a/a_i$. Note that $a$ and the $b_{i}$ are prime to $p$. Hence the morphism $h$ defined as
\begin{eqnarray*}
Y:=S[t_1,\dots, t_n]/(\pi-t_1^{a}\cdots t_m^{a}) & \longrightarrow    &  X\\
(t_1, \dots, t_n) &  \longrightarrow   &(t_1^{b_1}, \dots , t_m^{b_m},t_{m+1}, \dots,  t_n) 
\end{eqnarray*}
is finite surjective and finite etale above $\eta$ with Galois group $G$. Set $g=fh$. Then
$$
(\mathcal{H}^{i}\psi_f^{t}f^{\ast}N)_{\overline{0}}\simeq (\mathcal{H}^{i}\psi_g^{t}g^{\ast}N)_{\overline{0}}^{G}
$$
for every $i\geq 0$, so we can suppose $a_1=\cdots = a_m=a$. Since $a$ is prime to $p$, the map of absolute Galois groups induced by $S[\pi^{1/a}]\longrightarrow S$ induces an identification at the level of the ramification groups. By compatibility of nearby cycles with change of trait \cite[Th. finitude 3.7]{SGA4undemi}, we can suppose $a=1$.\\ \indent
Let us now reduce the proof of Theorem \ref{mainth} to the case where $m=1$. We argue by induction on $m$. The case $m=1$ follows from the compatibility of nearby cycles with smooth morphisms. We thus suppose that Theorem \ref{mainth} is true for $m<n$ with all $a_i$ equal to $1$ and prove it for $m+1$ with all $a_i$ equal to $1$. Let $h:\tilde{X}\longrightarrow X$ be the blow-up of $X$ along $x_{m}=x_{m+1}=0$. Define $g:=fh$ and denote by $E$ the exceptional divisor of $\tilde{X}$. Since $h$ induces an isomorphism on the generic fibers, and since $\psi^{t}_f$ is compatible with proper push-forward, we have 
\begin{equation}\label{annulationinfty}
Rh_{\ast}\psi_g^{t} g^{\ast}N \simeq \psi_f^{t} f^{\ast}N\simeq 0 
\end{equation}
By proper base change, \eqref{annulationinfty} gives
\begin{equation}\label{annulation2}
R\Gamma(h^{-1}(0),(\psi_g^{t} g^{\ast}N)_{|h^{-1}(0)} )\simeq 0
\end{equation}
The scheme $\tilde{X}$ is covered by a chart $U$  affine over $S$ given by
$$
S[(u_i)_{1\leq i\leq  n}]/(\pi-u_1\cdots u_{m})
$$
with $E\cap U$ given by $u_m=0$, and a chart $U^{\prime}$ affine over $S$ given by
$$
S[(u_i)_{1\leq i\leq  n}]/(\pi-u_1\cdots u_{m+1} )
$$
with $E\cap U^{\prime}$ given by $u_{m+1}=0$. By recursion hypothesis, $(\psi_g^{t} g^{\ast}N)_{|h^{-1}(0)}$ is a sky-scraper sheaf supported at the origin $0$ of $U^{\prime}$. Hence, \eqref{annulation2} gives
$$
(\psi_g^{t} g^{\ast}N)_{\overline{0}}\simeq 0
$$
This finishes the induction, and thus the proof of Theorem \ref{mainth}.
\subsection{}\label{geoproof}   Let us give a geometric-flavoured proof of 
$$\mathcal{H}^{0}\psi_f^{t} f^{\ast}N\simeq 0$$ in case $X=S[x_1,\dots, x_n]/(\pi-x_1^{a_1}\cdots x_m^{a_m})$. By constructibility \cite[Th. finitude 3.2]{SGA4undemi}, it is enough to work at the level of germs at a geometric point $\overline{x}$ lying over a closed point $x\in X$. \\ \indent
Hence, we have to prove $H^{0}(C, f^{\ast}N)\simeq 0$ for every connected component $C$ of $X_{x,\eta_t}^{\sh}$. For such $C$, denote by $\rho_C: \pi_1(C)\longrightarrow \pi_1(\eta_t)=P_K$ the induced map. Then $H^{0}(C, f^{\ast}N)\simeq N^{\Image \rho_C}$. Since by definition $N^{P_K}=0$, it is enough to prove that $\rho_C$ is surjective. From V 6.9 and  IX 3.4 of \cite{SGA1},  we are left to prove that $C$ is geometrically connected. To do this, we can always replace $X_{x}^{\sh}$ by its formalization
$\hat{X}_x=\Spec R \llbracket \underline{x}  \rrbracket/(\pi- x_1^{a_1}\cdots x_m^{a_m})$. \\ \indent
By hypothesis, $d:=\gcd(a_1, \dots, a_m)$ is prime to $p$, so $\pi$ has a $d$-root in $K_t$. Hence $\hat{X}_{x,\eta_t}$ is a direct union of $d$ copies of 
$$
\Spec K_t\otimes_R R \llbracket \underline{x}  \rrbracket/(\pi^{1/d}- x_1^{a_1^{\prime}}\cdots x_m^{a_m^{\prime}})
$$
where $a_i=da_i^{\prime}$. So we have to prove the following
\begin{lemme}
Let $a_1, \dots, a_m,d \in \mathds{N}^{\ast}$ with $\gcd(a_1, \dots, a_m)=1$. Then 
\begin{equation}
\Spec \overline{K}\otimes_R R \llbracket \underline{x}  \rrbracket/(\pi^{1/d}- x_1^{a_1}\cdots x_m^{a_m})
\end{equation}
is connected. 
\end{lemme}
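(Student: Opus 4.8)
The plan is to reduce the assertion, via a passage to the finite subextensions of $\overline{K}/K$, to the irreducibility of an explicit toric binomial in a complete regular local ring; the only input will be $\gcd(a_1,\dots,a_m)=1$. Throughout, write $\underline{x}=(x_1,\dots,x_n)$ with $m\leq n$, and recall that $\Spec$ of a nonzero ring is connected iff the ring has no idempotent besides $0$ and $1$. Since idempotents are insensitive to filtered colimits of rings, and since $\overline{K}=\varinjlim K^{\prime}$ over the finite subextensions $K^{\prime}/K$ inside $\overline{K}$ that contain $\pi^{1/d}$, one gets $\overline{K}\otimes_R R\llbracket\underline{x}\rrbracket/(\pi^{1/d}-x_1^{a_1}\cdots x_m^{a_m})=\varinjlim_{K^{\prime}}A_{K^{\prime}}$ with $A_{K^{\prime}}:=K^{\prime}\otimes_R R\llbracket\underline{x}\rrbracket/(\pi^{1/d}-x_1^{a_1}\cdots x_m^{a_m})$. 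An idempotent of this colimit already lives in some $A_{K^{\prime}}$, and the colimit is nonzero as soon as one $A_{K^{\prime}}$ is; so it suffices to prove that every $A_{K^{\prime}}$ is a nonzero integral domain.

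The second step makes $A_{K^{\prime}}$ explicit. Let $R^{\prime}\subset K^{\prime}$ be the valuation ring, $\varpi$ a uniformizer of $R^{\prime}$, $e:=v_{R^{\prime}}(\pi^{1/d})\geq 1$ and $u:=\pi^{1/d}\varpi^{-e}\in R^{\prime\times}$. Since $R=k\llbracket\pi\rrbracket$ and $R^{\prime}$ is a finite free $R$-module, choosing an $R$-basis of $R^{\prime}$ shows $R^{\prime}\otimes_R R\llbracket\underline{x}\rrbracket=R^{\prime}\llbracket\underline{x}\rrbracket=k\llbracket\varpi,x_1,\dots,x_n\rrbracket$, whence $A_{K^{\prime}}=k\llbracket\varpi,\underline{x}\rrbracket[\varpi^{-1},x_1^{-1},\dots,x_m^{-1}]/(u\varpi^e-x_1^{a_1}\cdots x_m^{a_m})$, inverting $x_1,\dots,x_m$ being harmless as $x_1^{a_1}\cdots x_m^{a_m}$ is a unit in the quotient. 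As $\gcd(a_1,\dots,a_m)=1$, pick $c_i\in\mathds{Z}$ with $\sum_i c_i a_i=1$; the automorphism sending $x_i$ to $u^{c_i}x_i$ for $i\leq m$ and fixing the other variables multiplies $x_1^{a_1}\cdots x_m^{a_m}$ by $u$, hence identifies $A_{K^{\prime}}$ with $S/(g)$, where $S:=k\llbracket\varpi,x_1,\dots,x_n\rrbracket[\varpi^{-1},x_1^{-1},\dots,x_m^{-1}]$ and $g:=\varpi^e-x_1^{a_1}\cdots x_m^{a_m}$.

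It remains to see that $g$ is a prime element of $S$. The ring $k\llbracket\varpi,\underline{x}\rrbracket$ is complete regular local, hence factorial, and so is its localization $S$; moreover $g$ is divisible in $k\llbracket\varpi,\underline{x}\rrbracket$ by none of $\varpi,x_1,\dots,x_m$, since setting any one of these variables to $0$ leaves a nonzero element. So it suffices to show $g$ irreducible in $k\llbracket\varpi,\underline{x}\rrbracket$. Being monic of degree $e$ in $\varpi$, one has $k\llbracket\varpi,\underline{x}\rrbracket/(g)=k\llbracket x_1,\dots,x_n\rrbracket[\varpi]/(g)$, so by Gauss's lemma $g$ is irreducible iff the binomial $\varpi^e-h$, $h:=x_1^{a_1}\cdots x_m^{a_m}$, is irreducible over $\Frac(k\llbracket x_1,\dots,x_n\rrbracket)$. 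By the classical irreducibility criterion for binomials (Vahlen–Capelli, valid in any characteristic), this holds provided $h$ is not an $\ell$-th power for any prime $\ell\mid e$ and, when $4\mid e$, provided $-4h$ is not a fourth power; but in the factorial ring $k\llbracket x_1,\dots,x_n\rrbracket$ the $x_i$ are prime, so $h$ has order $a_i$ along $x_i$, and such a power relation would force $\ell$ (resp. $4$) to divide every $a_i$, contradicting $\gcd(a_1,\dots,a_m)=1$. Hence $g$ is prime in $S$ and $A_{K^{\prime}}\cong S/(g)$ is a nonzero domain, which finishes the proof. The step I expect to demand the most care is the second one: it is precisely the Cohen structure theorem (writing $R$ and $R^{\prime}$ as power series rings over the perfect field $k$) that makes $A_{K^{\prime}}$ toric with the wild ramification of $K^{\prime}/K$ entering only through the single integer $e$, and the absorption of the unit $u$ must be done through the $x_i$'s via $\gcd(a_i)=1$ rather than through $\varpi$, since $u$ need not admit an $e$-th root when $p\mid e$; the irreducibility step is then routine, its only positive-characteristic subtlety — that $h$ fails to be a $p$-th power although $k\llbracket x_1,\dots,x_n\rrbracket$ is imperfect — being exactly guaranteed by $\gcd(a_i)=1$.
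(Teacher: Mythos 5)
Your proof is correct, but at the decisive step it takes a genuinely different route from the paper. Both arguments first descend to a finite subextension $K^{\prime}/K$ (you via idempotents in the filtered colimit, the paper via the normalization $R^{\prime}$ of $R$ in a finite Galois extension, after an initial reduction to $d=1$ which you bypass), and both use the structure theorem $R^{\prime}\simeq k\llbracket\varpi\rrbracket$ for the complete DVR. From there the paper stays with the integral model: it reduces to the irreducibility of $P(\varpi)-x_1^{a_1}\cdots x_m^{a_m}$ in $k\llbracket \varpi,\underline{x}\rrbracket$, where $\pi=P(\varpi)$ is a general power series, and quotes Lipkovski's Newton-polyhedron indecomposability criterion. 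You instead pass to the generic fibre at once, invert $\varpi$ and $x_1,\dots,x_m$ (legitimate, as you note, since $x_1^{a_1}\cdots x_m^{a_m}$ becomes a unit), absorb the unit $u$ through the coordinate change $x_i\mapsto u^{c_i}x_i$ with $\sum c_ia_i=1$, and are left with the pure binomial $\varpi^{e}-x_1^{a_1}\cdots x_m^{a_m}$, whose irreducibility follows from Weierstrass division, Gauss's lemma and the Vahlen--Capelli criterion. Your route buys a self-contained, elementary argument with no external citation and makes visible exactly where $\gcd(a_1,\dots,a_m)=1$ enters (unit absorption, and the power obstruction in Capelli's criterion); the paper's route is shorter on the page and yields the stronger fact that the whole integral model $\Spec R^{\prime}\llbracket\underline{x}\rrbracket/(\pi-x_1^{a_1}\cdots x_m^{a_m})$ is irreducible, at the price of the reference to Lipkovski. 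Two small points of precision in your write-up, neither of which affects validity: the identification $k\llbracket\varpi,\underline{x}\rrbracket/(g)=k\llbracket\underline{x}\rrbracket[\varpi]/(g)$ requires $g$ to be a distinguished (Weierstrass) polynomial in $\varpi$, not merely monic --- which holds here because $x_1^{a_1}\cdots x_m^{a_m}$ lies in the maximal ideal of $k\llbracket\underline{x}\rrbracket$; and in characteristic $2$ the extra clause of Capelli's criterion should be read as $h\notin -4F^{4}=\{0\}$, hence is vacuous there, so the valuation argument is only needed for the primes dividing $e$ (and for the $4\mid e$ clause in odd characteristic).
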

\begin{proof}
One easily reduces to the case $d=1$. If $R^{\prime}$ is the normalization of $R$ in a Galois extension of $K$ in  $ \overline{K}$, it is enough to prove that 
$\Spec  R^{\prime} \llbracket \underline{x}  \rrbracket/(\pi- x_1^{a_1}\cdots x_m^{a_m})$
is irreducible. If $\pi^{\prime}$ is a uniformizer of $R^{\prime}$, we have $R^{\prime}\simeq k\llbracket \pi^{\prime}\rrbracket$, we write $\pi=P(\pi^{\prime})$ where $P\in k\llbracket X\rrbracket$ and then we are left to prove that $f_{a,P}:=P(\pi^{\prime})- x_1^{a_1}\cdots x_m^{a_m}$ is irreducible in $k\llbracket x_1,\dots, x_n, \pi^{\prime}\rrbracket$. This follows from $\gcd(a_1, \dots, a_m)=1$ via Lypkovski's indecomposability criterion  \cite[2.10]{Lip} for the Newton polyhedron associated to $f_{a,P}$.

\end{proof}

\bibliographystyle{amsalpha}
\bibliography{Saito-Sato-Seminar}

\end{document}